\newtheorem{theorem}{Theorem}
\newtheorem{definition}[theorem]{Definition}
\newtheorem{claim}[theorem]{Claim}
\newtheorem{note}[theorem]{Note}
\newtheorem{remark}[theorem]{Remark}
\def\dHH#1{\leavevmode\setbox0=\hbox{#1}\dimen0=\wd0\setbox0=\hbox{.}%
	\advance\dimen0 by -\wd0%
	\hbox{#1\raise-0.5ex\hbox to 0pt{\hss.\kern.5\dimen0}}}%
\newcommand{\xx}{\mathcal{X}}
\newcommand{\yy}{\mathcal{Y}}
\newcommand{\zz}{\mathcal{Z}}
\renewcommand{\aa}{\mathcal{A}}
\newcommand{\bb}{\mathcal{B}}
\newcommand{\cc}{\mathcal{C}}
\newcommand{\lr}{\Leftrightarrow}
\begin{document}
\title[Note on Ramsey theorem for posets]{Note on Ramsey theorem for posets with linear extensions}

\author[A.Arman]{Andrii Arman}
\address{Department of mathematics, University of Manitoba, Winnipeg, Manitoba R3T 2N2, Canada}
\email{andrew0arman@gmail.com}
\thanks{}

\author[Vojt\v{e}ch R\"{o}dl]{Vojt\v{e}ch R\"{o}dl}
\address{Department of Mathematics and Computer Science, 
Emory University, Atlanta, GA 30322, USA}
\email{rodl@mathcs.emory.edu}
\thanks{The second  author was supported by NSF grant DMS 1301698}

\keywords{Ramsey theorem, posets}
\subjclass[2010]{05C55 (primary), 06A07 (secondary)}

\date{\today}


\begin{abstract} 
In this note we consider a Ramsey type result for partially ordered sets. In particular we give an alternative short proof of a theorem for a posets with multiple linear extensions recently obtained by Solecki and Zhao in \cite{SZ}.
\end{abstract}

\maketitle

\section{Preliminary definitions}
A poset is a pair $(X,P^{X})$, where $X$ is a set and  $P^{X}$ is a partial order on $X$. We consider partial orders that are strict, i.e. not reflexive. 

We say that a partial order $L^{X}$ on $X$ \textit{extends} a partial order $P^{X}$ on $X$ if for all $x, y \in X$
$$x P^{X} y \Rightarrow x L^{X} y.$$ 

If $(X,P^{X})$ is a poset and $U \subset X$ we denote by $P^{X}|_{U}$ the \textit{restriction} of $P^{X}$ onto $U$. 

Below, we consider collections  $\mathcal{L}^{X}_k=(L_1^{X}, L_2^{X}, \dots, L_k^{X})$ ,where each of $L_{i}^{X}$ is a linear order on $X$.

\begin{definition}
We denote by $PL^{(k)}$ the set consisting of all
triplets $(X, P^{X}, \mathcal{L}^{X}_k)$, where $(X,P^{X})$ is a poset and each $L_{i}^{X}$ for $ i \in [k]$ is a linear order that extends $P^{X}$.
\end{definition}

\begin{definition}
Let $\xx , \yy \in PL^{(k)}$, where $\xx=(X, P^{X}, \mathcal{L}^{X}_k)$ and $\yy=(Y, P^{Y}, \mathcal{L}^{Y}_k)$.
We  write $\xx \subseteq \yy$ if 
\begin{itemize}
\item $X\subseteq Y$ and $P^{Y}|_X$ extends $P^{X}$.
\item $L^{Y}_{i}|_X=L^{X}_{i}$ for all $i\in [k]$.
\end{itemize} 
\end{definition}

\begin{definition}

Let $\xx , \yy \in PL^{(k)}$, where $\xx=(X, P^{X}, \mathcal{L}^{X}_k)$ and $\yy=(Y, P^{Y}, \mathcal{L}^{Y}_k)$. We say that a mapping $\pi : X \to Y$ is order preserving for $\xx$ and $\yy$ if  for any $i \in [k]$ and any $x,y \in X$ we have 
$$ x L_{i}^X y \lr \pi(x)L_{i}^{Y}\pi(y) \; \; \;\text{and} \; \; \;  x P^{X}y \lr \pi(x)P^{Y}\pi(y) .$$
\end{definition}
\begin{definition} \label{def:isomor}
We say that $\pi$ is an isomorphism between $\xx \in PL^{(k)}$ and $\tilde{\xx} \in PL^{(k)}$ if it is order preserving bijection. We say that $\xx \in PL^{(k)}$ is isomorphic to $\tilde{\xx} \in PL^{(k)}$ if there is an isomorphism between $\xx$ and $\tilde{\xx}$.
\end{definition}

\begin{definition}\label{def:copy}
Let  $k > 0$ and  $\xx, \yy \in PL^{(k)}$. We say that $\tilde{\xx} \in PL^{(k)}$ is a copy of $\xx$ in $\yy$ if 
$\tilde{\xx} \subseteq \yy$ and $\tilde{\xx}$ is isomorphic to $\xx$.
For $\xx, \yy \in PL^{(k)}$ denote by  
$\binom{\yy}{\xx}$ the set of all copies of $\xx$ in $\yy$.

\end{definition}
For any $\tilde{\xx} \in \binom{\yy}{\xx}$ there is unique order preserving mapping $\pi: X \to \tilde{X}$. On other hand, any order preserving mapping $\pi : X \to Y$ induces a copy $\tilde{\xx}=\pi(\xx) \in \binom{\yy}{\xx}$.
We identify each $\tilde{\xx} \in \binom{\yy}{\xx}$ with corresponding order preserving mapping $\pi$ and will say that $\pi$ is a copy of $\xx$ in $\yy$ instead of saying that $\tilde{\xx}$ is a copy of $\xx$ in $\yy$ with corresponding order preserving mapping $\pi$. 

We refer to the following theorem as to Ramsey theorem for posets with one linear extension.

\begin{theorem}\label{thm:one_ext}
For any integer $r$ and any $\xx, \yy \in PL^{(1)} $ there is $\zz \in PL^{(1)}$, such that for any $r$-colouring of set $\binom{\zz}{\xx}$ there is $\tilde{\yy}$, a copy of $\yy$ in $\zz$, such that $\binom{\tilde{\yy}}{\xx}$ is monochromatic.  
\end{theorem}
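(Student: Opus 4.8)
Theorem \ref{thm:one_ext} is classical: it was proved by Ne\v{s}et\v{r}il and R\"{o}dl and, independently, by Paoli, Trotter and Winkler, and it also fits into the general Ne\v{s}et\v{r}il--R\"{o}dl theory of Ramsey classes of finite ordered structures, so in a finished write-up one would simply quote it. If one wanted to prove it afresh, the natural route is the \emph{partite construction}, and the plan would be the following. First, identify each $\xx=(X,P^X,L^X)\in PL^{(1)}$ with the finite linearly ordered set $(X,L^X)$ carrying the additional ``forward'' binary relation $P^X$ (recall $x\,P^X\,y$ forces $x\,L^X\,y$); then a copy of $\xx$ in $\yy$ is exactly a subset of $Y$ whose induced relations, read off along $L^Y$, match those of $\xx$, the carried linear order makes every object of $PL^{(1)}$ rigid, and ``copy'' means ``induced ordered substructure''. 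Two ingredients are then needed: a free amalgamation property for $PL^{(1)}$, and a partite (pigeonhole) lemma.

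For the first, I would prove: if $\aa\subseteq\bb$ and $\aa\subseteq\cc$ with $\bb,\cc\in PL^{(1)}$, then there is $\cD\in PL^{(1)}$ containing copies of $\bb$ and of $\cc$ that agree on a copy of $\aa$ and are otherwise vertex-disjoint. One sets $D=B\cup_A C$, defines $L^D$ by interleaving $B\setminus A$ with $C\setminus A$ consistently with both $L^B$ and $L^C$ --- each new vertex kept between the same two consecutive $A$-elements it had before, remaining ties broken, say, in favour of $B$ --- and lets $P^D$ be the transitive closure of $P^B\cup P^C$. The point is that this stays inside $PL^{(1)}$ and adds nothing new on $B$ or on $C$: $L^D$ still extends $P^D$ since every added $P^D$- or $L^D$-edge lies within one $A$-interval; and a putative new comparability $x\,P^D\,y$ with $x,y\in B$ would come from a path alternating $P^B$- and $P^C$-edges, but every maximal $P^C$-subpath of it has both endpoints in $A$, hence collapses to a single $P^A\subseteq P^B$ edge, so the path reduces to a $P^B$-path. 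For the second ingredient, write $x_1<_{L^X}\cdots<_{L^X}x_m$ for the vertices of $\xx$ and call a structure $\mathbf{B}\in PL^{(1)}$ \emph{$\xx$-partite} if its vertex set splits into consecutive $L^{\mathbf{B}}$-blocks $V_1<\cdots<V_m$; a transversal copy of $\xx$ meets each $V_i$ once. The Partite Lemma says: for every $\xx$-partite $\mathbf{B}$ and every $r$ there is an $\xx$-partite $\mathbf{P}$ so that every $r$-colouring of the transversal copies of $\xx$ in $\mathbf{P}$ is constant on the transversal copies lying in some partite copy of $\mathbf{B}$ inside $\mathbf{P}$. I would prove this from the Hales--Jewett theorem: replace each part $V_i$ by $V_i\times[t]$ with $t$ large, order it lexicographically inside its block, and lift $P$ coordinate-blindly, $(v,s)\,P\,(w,s')\iff v\,P\,w$; this blow-up is again in $PL^{(1)}$, and a Hales--Jewett combinatorial line produces the required monochromatic partite copy of $\mathbf{B}$.

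With these in hand, $\zz$ is assembled by the partite construction proper: one starts from a $\yy$-partite structure whose transversals are copies of $\yy$, passes through a finite chain of $\yy$-partite structures where at each step the sub-system formed by the copies of $\xx$ lying over one fixed copy of $\xx$ in the reference picture is replaced by a Partite-Lemma witness and glued back by the amalgamation property, and finally forgets the partition; a backward induction along this chain, each step invoking the defining property of the Partite Lemma, extracts a copy of $\yy$ in $\zz$ all of whose copies of $\xx$ carry a single colour. I expect the main obstacle to be not any isolated deep step but the steady bookkeeping needed to remain inside $PL^{(1)}$ throughout: every blow-up in the Partite Lemma and every amalgamation must deliver a genuine strict partial order --- so transitivity is never broken --- that is still extended by the linear order being carried along, and all maps between consecutive stages must be order-preserving in the strong sense of Definition \ref{def:isomor}, reflecting both the linear and the partial orders simultaneously. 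The free-amalgamation statement above is exactly the tool that makes these verifications go through, after which the construction proceeds as in the hypergraph case of the Ne\v{s}et\v{r}il--R\"{o}dl argument.
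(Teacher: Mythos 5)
The paper itself does not prove Theorem \ref{thm:one_ext}: it is used as a black box and attributed to \cite{NR} and \cite{PTW} (Paoli, Trotter and \emph{Walker}, incidentally, not Winkler), so your first instinct --- quote it and move on --- is exactly what the authors do, and for this note nothing more is required.

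Since you nevertheless sketched a proof, I should point out that one step of the sketch is genuinely wrong: the Partite Lemma as you propose to prove it. Replacing each part $V_i$ by $V_i\times[t]$ and lifting $P$ coordinate-blindly fails for every $t$. Concretely, let $\xx$ be a single $P$-comparable pair and let $\mathbf{B}$ have parts $\{a_1,a_2\}$ and $\{b_1,b_2\}$ with exactly the relations $a_1\,P\,b_1$ and $a_2\,P\,b_2$. In your blow-up a partite copy of $\mathbf{B}$ must consist of one lift of each of $a_1,a_2,b_1,b_2$, and its two transversal copies of $\xx$ are a lift of $\{a_1,b_1\}$ and a lift of $\{a_2,b_2\}$; colouring all lifts of $\{a_1,b_1\}$ red and all lifts of $\{a_2,b_2\}$ blue defeats the conclusion no matter how large $t$ is (nor is there any combinatorial cube here over which a Hales--Jewett line could be taken). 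The correct Hales--Jewett argument uses the $N$-th \emph{power}: the $i$-th part of $\mathbf{P}$ is $V_i^N$, relations are required to hold in every coordinate, the linear order inside each block is lexicographic; words of length $N$ over the alphabet $A=\binom{\mathbf{B}}{\xx}_{\mathrm{tr}}$ then parametrize transversal copies of $\xx$ in $\mathbf{P}$, a combinatorial line spans a partite copy of $\mathbf{B}$, and taking $N$ to be the Hales--Jewett number finishes the lemma; the verification that this coordinatewise power with lexicographic orders stays in $PL^{(1)}$ is the same computation as for the join in Definition \ref{def:join}. Two smaller repairs to the amalgamation step: a new comparability $x\,P^D\,y$ with $x\in B\setminus A$, $y\in C\setminus A$ never lies inside one $A$-interval --- it straddles some $a\in A$ with $x\,P^B\,a$ and $a\,P^C\,y$, and that is precisely why $L^D$ extends $P^D$; and your path-collapsing argument requires amalgamating over an \emph{induced} copy of $\aa$ (that is, $P^B|_A=P^A=P^C|_A$), which is the case that actually arises in the partite construction, since copies in the sense of Definition \ref{def:copy} are induced. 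With the Partite Lemma repaired, the remaining outline is the standard Ne\v{s}et\v{r}il--R\"odl partite construction and is fine.
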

Ramsey properties of the class of partially ordered sets were considered in \cite{NR} and \cite{PTW}, where all partially ordered sets with P-Ramsey properties were characterised (see also \cite{NR2}). Subsequently some extensions and related results were obtained in \cite{Promel} and \cite{Fouche}, using different method.  

Next theorem is a product version of the Theorem \ref{thm:one_ext}, that we are going to use in Section \ref{sec:proof}. Proof of this theorem is based on a standard folkloristic argument. For similar results of this type see e.g. \cite{Promel}.

\begin{theorem}\label{thm:product}
For any $\xx_i, \yy_{i}\in PL^{(1)}$ with $i\in [k]$  there are $\zz_i \in PL^{(1)}$ with $i \in [k]$, such that for any 2-colouring of set $\binom{\zz_1}{\xx_1} \times \dots \times \binom{\zz_k}{\xx_k}$  there are $\tilde{\yy_i}$, a copies of $\yy_i$ in $\zz_i$ for $i \in [k]$, such that $\binom{\tilde{\yy_1}}{\xx_1} \times \dots \times \binom{\tilde{\yy_k}}{\xx_k}$ is monochromatic.  
\end{theorem}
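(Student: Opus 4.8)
The plan is to prove the statement by induction on $k$, at each step peeling off one coordinate and compensating with a larger number of colours in the single-coordinate theorem. For $k=1$ the assertion is precisely Theorem~\ref{thm:one_ext} with $r=2$, so suppose $k\ge 2$ and that the product theorem is already known for $k-1$ coordinates. First I would apply this inductive hypothesis to the pairs $(\xx_2,\yy_2),\dots,(\xx_k,\yy_k)$ to obtain $\zz_2,\dots,\zz_k\in PL^{(1)}$ with the property that every $2$-colouring of $\binom{\zz_2}{\xx_2}\times\dots\times\binom{\zz_k}{\xx_k}$ admits copies $\tilde{\yy_i}$ of $\yy_i$ in $\zz_i$ ($2\le i\le k$) with $\binom{\tilde{\yy_2}}{\xx_2}\times\dots\times\binom{\tilde{\yy_k}}{\xx_k}$ monochromatic; note that $\zz_2,\dots,\zz_k$ do not depend on $\xx_1,\yy_1$. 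Then I would set $Q=\bigl|\binom{\zz_2}{\xx_2}\times\dots\times\binom{\zz_k}{\xx_k}\bigr|$ (a finite number, since all structures here are finite) and apply Theorem~\ref{thm:one_ext} to $\xx_1,\yy_1$ with $r=2^{Q}$ to obtain $\zz_1\in PL^{(1)}$ such that every $2^{Q}$-colouring of $\binom{\zz_1}{\xx_1}$ has a copy $\tilde{\yy_1}$ of $\yy_1$ in $\zz_1$ with $\binom{\tilde{\yy_1}}{\xx_1}$ monochromatic.

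Now, given a $2$-colouring $c$ of $\binom{\zz_1}{\xx_1}\times\dots\times\binom{\zz_k}{\xx_k}$, I would define an auxiliary colouring $c_1$ of $\binom{\zz_1}{\xx_1}$ by letting $c_1(\alpha)$ be the function $\bigl(\vec{\beta}\mapsto c(\alpha,\vec{\beta})\bigr)$ from $\binom{\zz_2}{\xx_2}\times\dots\times\binom{\zz_k}{\xx_k}$ to $\{0,1\}$; thus $c_1$ is a $2^{Q}$-colouring. By the choice of $\zz_1$ there is a copy $\tilde{\yy_1}$ of $\yy_1$ in $\zz_1$ on which $c_1$ is constant, i.e.\ a single function $\phi:\binom{\zz_2}{\xx_2}\times\dots\times\binom{\zz_k}{\xx_k}\to\{0,1\}$ with $c(\alpha,\vec{\beta})=\phi(\vec{\beta})$ for all $\alpha\in\binom{\tilde{\yy_1}}{\xx_1}$ and all $\vec{\beta}$. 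Applying the inductive conclusion attached to $\zz_2,\dots,\zz_k$ to the $2$-colouring $\phi$ produces copies $\tilde{\yy_i}$ of $\yy_i$ in $\zz_i$ for $2\le i\le k$ on which $\phi$ is constant, say equal to $\eps\in\{0,1\}$. Then $c(\alpha,\vec{\beta})=\eps$ for every $\alpha\in\binom{\tilde{\yy_1}}{\xx_1}$ and every $\vec{\beta}\in\binom{\tilde{\yy_2}}{\xx_2}\times\dots\times\binom{\tilde{\yy_k}}{\xx_k}$, so $\binom{\tilde{\yy_1}}{\xx_1}\times\dots\times\binom{\tilde{\yy_k}}{\xx_k}$ is monochromatic. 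Here one uses the elementary fact that a copy of $\xx_i$ inside the copy $\tilde{\yy_i}$ of $\yy_i$ in $\zz_i$ is itself a copy of $\xx_i$ in $\zz_i$ (so that $\binom{\tilde{\yy_i}}{\xx_i}\subseteq\binom{\zz_i}{\xx_i}$ and these sub-copies are genuinely coloured by $c$), which follows from transitivity of the relation $\subseteq$ and from the fact that a composition of order-preserving maps is order-preserving.

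I do not expect a real obstacle here: this is the usual folklore product argument. The only points requiring attention are bookkeeping ones: one must choose $\zz_2,\dots,\zz_k$ before $\zz_1$ so that the bound $2^{Q}$ is available when Theorem~\ref{thm:one_ext} is invoked for the first coordinate; one must observe that the recursion does not inflate the number of colours, since at each recursive level the induced colouring $\phi$ again uses only two colours, so the many-colour version of the single-coordinate theorem is applied only to the coordinate that is peeled off; and one must verify the composition property of copies mentioned above.
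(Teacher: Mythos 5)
Your argument is correct and is exactly the standard folklore product argument that the paper invokes without proof for Theorem~\ref{thm:product} (it only remarks that the proof is a "standard folkloristic argument", citing \cite{Promel}): induct on the number of coordinates, peel off the first one using Theorem~\ref{thm:one_ext} with $2^{Q}$ colours, and note that the induced colouring on the remaining coordinates is again a $2$-colouring. The auxiliary points you flag (finiteness of $Q$, availability of arbitrary $r$ in Theorem~\ref{thm:one_ext}, and transitivity of the copy relation so that $\binom{\tilde{\yy_i}}{\xx_i}\subseteq\binom{\zz_i}{\xx_i}$) all check out against the paper's definitions.
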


To distinguish between the objects of $PL^{(1)}$, which will play a special role in our proof, and  $PL^{(k)}$ for $k\geq 2$, from now on, we use letters $\xx$, $\yy$ and $\zz$ for elements of $PL^{(1)}$ and $\aa$, $\bb$, $\cc$ for elements of $PL^{(k)}$.

Based on Theorem \ref{thm:product}, in Section \ref{sec:proof} we are going to prove the following result, first obtained in \cite{SZ} .

\begin{theorem} \label{thm:main}
For any integer $k$ any $\mathcal{A}, \bb \in PL^{(k)} $ there is $\cc \in PL^{(k)}$, such that for any colouring $2$-colouring of set $\binom{\cc}{\mathcal{A}}$ there is $\tilde{\bb}$, a copy of $\bb$ in $\cc$, such that $\binom{\tilde{\bb}}{\mathcal{A}}$ is monochromatic.  
\end{theorem}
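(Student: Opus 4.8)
The plan is to reduce Theorem \ref{thm:main} to the single-extension product statement, Theorem \ref{thm:product}, by encoding a $k$-tuple of linear orders as one coordinatewise structure on a product set.

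First I separate the linear extensions. Given $\aa=(A,P^A,\mathcal{L}^A_k)$ and $\bb=(B,P^B,\mathcal{L}^B_k)$ in $PL^{(k)}$, for each $i\in[k]$ set $\aa_i:=(A,P^A,L_i^A)$ and $\bb_i:=(B,P^B,L_i^B)$; since $L_i^A$ extends $P^A$ and $L_i^B$ extends $P^B$, these belong to $PL^{(1)}$. I then apply Theorem \ref{thm:product} to the families $(\aa_i)_{i\in[k]}$ and $(\bb_i)_{i\in[k]}$ to obtain $\zz_i=(Z_i,P^{Z_i},M_i)\in PL^{(1)}$, $i\in[k]$, with the asserted product-Ramsey property.

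Next I build the target $\cc$ on the product set $C:=Z_1\times\cdots\times Z_k$. Writing $c_i$ for the $i$-th coordinate of $c\in C$ and fixing an arbitrary linear order $\prec$ on $C$, I declare $c\,P^C\,c'$ iff $c_i\,P^{Z_i}\,c'_i$ for all $i\in[k]$, and $c\,N_i\,c'$ iff $c_i\,M_i\,c'_i$, or ($c_i=c'_i$ and $c\prec c'$). A routine verification shows $P^C$ is a strict partial order and each $N_i$ a strict linear order extending $P^C$, so $\cc:=(C,P^C,N_1,\dots,N_k)\in PL^{(k)}$. For copies $\pi_i\in\binom{\zz_i}{\aa_i}$, $i\in[k]$, let $\Psi(\pi_1,\dots,\pi_k)$ be the map $a\mapsto(\pi_1(a),\dots,\pi_k(a))$ from $A$ to $C$. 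Because each $\pi_i$ is injective, the images of distinct points of $A$ differ in every coordinate, so each $N_i$-comparison among them reduces to the corresponding $M_i$-comparison and therefore matches $L_i^A$; and because all the $\aa_i$ share the single partial order $P^A$, the equivalences $a\,P^A\,a'\lr\pi_i(a)\,P^{Z_i}\,\pi_i(a')$, one for each $i$, combine to $a\,P^A\,a'\lr\Psi(\pi_1,\dots,\pi_k)(a)\,P^C\,\Psi(\pi_1,\dots,\pi_k)(a')$. Hence $\Psi(\pi_1,\dots,\pi_k)$ is order preserving, i.e.\ a copy of $\aa$ in $\cc$; the same recipe yields $\Psi_B\colon\prod_{i\in[k]}\binom{\zz_i}{\bb_i}\to\binom{\cc}{\bb}$.

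Now, given a $2$-colouring $\chi$ of $\binom{\cc}{\aa}$, I colour $\prod_{i\in[k]}\binom{\zz_i}{\aa_i}$ by $\chi':=\chi\circ\Psi$, apply Theorem \ref{thm:product} to get copies $\tau_i$ of $\bb_i$ in $\zz_i$ with $\prod_{i\in[k]}\binom{\tau_i}{\aa_i}$ monochromatic, and put $\tilde\bb:=\Psi_B(\tau_1,\dots,\tau_k)\in\binom{\cc}{\bb}$, with associated embedding $\tau\colon b\mapsto(\tau_1(b),\dots,\tau_k(b))$. Since $\subseteq$ is transitive, $\binom{\tilde\bb}{\aa}\subseteq\binom{\cc}{\aa}$, and I claim it is $\chi$-monochromatic. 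Any $\rho\in\binom{\tilde\bb}{\aa}$ factors as $\rho=\tau\circ\rho'$ with $\rho':=\tau^{-1}\circ\rho$ a copy of $\aa$ in $\bb$, hence also (same underlying map, fewer conditions) a copy of each $\aa_i$ in $\bb_i$; composing with the copies $\tau_i$ gives $(\tau_1\circ\rho',\dots,\tau_k\circ\rho')\in\prod_{i\in[k]}\binom{\tau_i}{\aa_i}$, and reading coordinates shows $\rho=\Psi(\tau_1\circ\rho',\dots,\tau_k\circ\rho')$, so $\chi(\rho)=\chi'(\tau_1\circ\rho',\dots,\tau_k\circ\rho')$ equals the fixed colour.

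The step I expect to be the crux is this last one: the coordinatewise map $\Psi$ is \emph{not} surjective onto $\binom{\cc}{\aa}$, because a copy of $\aa$ in $\cc$ can become non-injective after projecting to some $Z_i$, so one cannot merely transport the colouring back and forth between $\binom{\cc}{\aa}$ and $\prod_i\binom{\zz_i}{\aa_i}$. What rescues the argument is that every copy of $\aa$ lying \emph{inside} $\tilde\bb$ does split along coordinates, precisely because $\tilde\bb$ is itself assembled coordinatewise from the $\tau_i$. A subsidiary point that must be got right is the definition of $P^C$: it has to be the coordinatewise $P^{Z_i}$-order, which in general is strictly smaller than $\bigcap_{i\in[k]}N_i$, so that $\cc$ carries copies of $\aa$ even when $P^A\neq\bigcap_{i\in[k]}L_i^A$.
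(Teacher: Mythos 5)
Your proposal is correct and follows essentially the same route as the paper: apply the product form of the one-extension theorem to the $\xx_i=(A,P^A,L_i^A)$, build $\cc$ on the product set with the coordinatewise partial order, embed copies coordinatewise, and use the key fact that every copy of $\aa$ inside a coordinatewise copy of $\bb$ again splits along coordinates (the paper's Claims \ref{claim:2} and \ref{claim:3}), which is exactly the crux you identified. The only (immaterial) difference is cosmetic: you break ties in the $i$-th linear order by an arbitrary fixed linear order on $C$, whereas the paper uses shifted lexicographic orders; both work because canonical copies have pairwise distinct $i$-th coordinates, so the tie-breaking is never invoked.
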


\section{Properties of join and canonical copies}
First, we define the join of $k$ elements of $PL^{(1)}$.

\begin{definition}\label{def:join}
Let $\zz_{i}=(Z_i, P^{Z_i}, L^{Z_i})\in PL^{(1)}$ for $i \in [k]$ ans set $C=\Pi_{i=1}^{k}Z_i$.

Define partial order $<_C$ on set $C$  by
$\overline{x}<_{C}\overline{y}$  if $x_{i}P^{Z_{i}}y_{i}$ for all $i \in [k]$.

For all $i \in [k]$ define shifted lexicographic orders $<_{lx_i}$ on set $\Pi_{i=1}^{k}Z_i$, by 

$$\overline{x}<_{lx_i}\overline{y} \lr x_{i+\delta}L^{Z_{i+\delta}}y_{i+\delta},$$
where $\delta$ is the smallest non-negative number $j$, for which $x_{i+j}\neq y_{i+j}$ (with addition mod $k$). Let $\mathcal{L}_{k}^{C}=(<_{lx_1}, <_{lx_2}, \dots, <_{lx_k})$.
Then the  join of $\zz_1, \dots, \zz_k$ is $$\cc=(C, P^C, \mathcal{L}_{k}^C).$$
For notation, we will use $\cc=\sqcup_{i=1}^k \zz_i$.
\end{definition}
Note, that for $\zz_1, \dots,  \zz_k \in PL^{(1)}$ we have that $\sqcup_{i=1}^{k} \zz_{2} \in PL^{(k)}$. Indeed, since  $L^{Z_{i}}$ extends $P^{Z_i}$ we infer that $<_{lx_i}$ also extends $<_{C}$ for $i \in [k]$.

\begin{claim}\label{claim:1}
Let $\zz_{i}=(Z_i, P^{Z_i}, L^{Z_i})\in PL^{(1)}$ for $i \in [k] $ and let $\aa=(X, P^{X}, \mathcal{L}^{X}_{k}) \in PL^{(k)}$. Set $\cc=\sqcup_{i=1}^{k} \zz_i$ and let $\pi_{i} : X \to Z_{i}$ be a copy of $\xx_{i}=(X, P^{X}, L_{i}^{X})$ in $\zz_{i}$ for $i \in [k]$. Then the image of the mapping $\pi : X \to C$ , defined by 
$$\pi(x)=(\pi_1(x), \pi_2(x), \dots, \pi_k(x))$$
for each $x \in X$, is a copy of $\mathcal{A}$ in $\binom{\cc}{\mathcal{A}}$.
\end{claim}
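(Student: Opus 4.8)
The plan is to verify directly that the map $\pi$ defined coordinatewise is an order-preserving injection from $X$ into $C$, so that its image, equipped with the restricted orders, is a copy of $\aa$ in $\cc$. First I would check injectivity: if $x \neq y$, then since $L_1^X$ is a linear order we have $x L_1^X y$ or $y L_1^X x$, and since $\pi_1$ is a copy (hence order-preserving for $\xx_1$ and $\zz_1$, in particular injective), $\pi_1(x) \neq \pi_1(y)$, so $\pi(x) \neq \pi(y)$. It then remains to check the two families of equivalences in the definition of order-preserving, namely that for all $x,y \in X$ and all $i \in [k]$,
$$ x L_i^X y \lr \pi(x) <_{lx_i} \pi(y) \qquad \text{and} \qquad x P^X y \lr \pi(x) <_C \pi(y).$$

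For the partial-order part, $x P^X y$ holds iff $x P^X y$ for the poset $(X,P^X)$, and since each $\pi_i$ is order-preserving for $\xx_i$ and $\zz_i$ we have $x P^X y \lr \pi_i(x) P^{Z_i} \pi_i(y)$ for every $i$. By the definition of $<_C$ in Definition \ref{def:join}, $\pi(x) <_C \pi(y)$ holds iff $\pi_i(x) P^{Z_i} \pi_i(y)$ for all $i \in [k]$; combining with the forward direction for each $i$ gives $x P^X y \Rightarrow \pi(x) <_C \pi(y)$, and since $x P^X y$ is equivalent to $\pi_i(x) P^{Z_i}\pi_i(y)$ holding for a single (hence all) $i$, the converse follows as well. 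Actually the cleanest way: $x P^X y \Leftrightarrow \pi_i(x) P^{Z_i}\pi_i(y)$ for each fixed $i$, and "$\pi_i(x)P^{Z_i}\pi_i(y)$ for all $i$" is by definition "$\pi(x) <_C \pi(y)$", so all these statements are equivalent.

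For the linear-order part, fix $i \in [k]$ and distinct $x,y \in X$. Let $\delta$ be the smallest nonnegative $j$ with $\pi_{i+j}(x) \neq \pi_{i+j}(y)$ (indices mod $k$); such $\delta$ exists because $\pi$ is injective. By the definition of $<_{lx_i}$, $\pi(x) <_{lx_i} \pi(y)$ iff $\pi_{i+\delta}(x) L^{Z_{i+\delta}} \pi_{i+\delta}(y)$, which since $\pi_{i+\delta}$ is order-preserving for $\xx_{i+\delta}$ is equivalent to $x L^X_{i+\delta} y$. So it suffices to show $x L_i^X y \lr x L_{i+\delta}^X y$. For $0 \le j < \delta$ we have $\pi_{i+j}(x) = \pi_{i+j}(y)$, and since $\pi_{i+j}$ is injective this forces $x = y$ — wait, that is only true when $i+j$ ranges over indices where $\pi$ is injective as a single coordinate, which it is; but $x \neq y$, contradiction unless $\delta = 0$? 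The subtle point is that injectivity of each single $\pi_i$ would make $\delta$ always $0$, so in fact $\pi_i(x) \neq \pi_i(y)$ already and $\delta = 0$, giving immediately $\pi(x) <_{lx_i}\pi(y) \lr \pi_i(x)L^{Z_i}\pi_i(y) \lr x L_i^X y$.

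The main obstacle, then, is not any deep combinatorics but making sure the bookkeeping about which coordinate governs $<_{lx_i}$ is handled correctly: one must observe that each $\pi_i$, being a copy of $\xx_i$ in $\zz_i$, is in particular injective (as $L_i^X$ linearly orders $X$ and order is preserved), so the "shift" $\delta$ in the definition of $<_{lx_i}$ is always $0$ on the image of $\pi$, and the argument collapses to a coordinatewise check. I would present the proof by first recording this injectivity remark, then dispatching the $P^X$ equivalence via the definition of $<_C$, then the $L_i^X$ equivalence via $\delta = 0$ and the definition of $<_{lx_i}$; finally concluding that $\pi$ is an order-preserving injection, hence $\pi(\aa) \in \binom{\cc}{\aa}$ by the identification of copies with order-preserving maps discussed after Definition \ref{def:copy}.
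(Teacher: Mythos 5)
Your proof is correct and follows essentially the same route as the paper: a direct coordinatewise verification that $\pi$ is order preserving for $\aa$ and $\cc$, using the definition of $<_C$ for the partial order and of $<_{lx_i}$ for each linear order. The only difference is that you make explicit what the paper leaves implicit, namely that each $\pi_i$ is injective, so the shift $\delta$ in the definition of $<_{lx_i}$ is always $0$ on the image of $\pi$ and the equivalence $\pi_i(x)L^{Z_i}\pi_i(y)\lr\pi(x)<_{lx_i}\pi(y)$ holds coordinatewise; this is a worthwhile clarification rather than a new approach.
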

\begin{remark}\label{remark:1}$ $

\begin{itemize}

\item We say that the image of the mapping $\pi$ from Claim \ref{claim:1}, is a \textit{canonical} copy of $\mathcal{\bb}$ in $\cc=\sqcup_{i=1}^k\zz_k$.
\item By $\binom{\cc}{\mathcal{B}}_{can} \subseteq \binom{\cc}{\mathcal{B}}$ we denote a set of all canonical copies of $\mathcal{B}$ in $\cc$.
\end{itemize}

\end{remark}

\begin{proof}
We need to verify that $\pi: X \to C$ is order preserving for $\aa$ and $\cc$.  
Indeed, we observe that if $x,y \in X$, then fact that $\pi_i: X \to Z_{i}$ preserves $P^{X}$ for $i \in [k]$ combined with definition of $\cc$  yields
$$xP^Xy \lr 
\forall i\in [k] : \pi_i(x) P^{Z_i}\pi_i(y) 
\lr \pi(x)<_C\pi(y).$$
Since $\pi_i$ preserves $L^{X}_i$ for $i \in [k] \;$ , we have 
$$ xL_i^Xy \lr \pi_i(x)L_i^{Z_i}\pi_i(y) \lr \pi(x)<_{lx_{i}}\pi(y)$$
for $i \in [k]$. Hence, $\pi$ preserves $P^{X}$ and $L^{X}_i$ for $i \in [k]$.

\end{proof}
 For the rest of this section we assume that $\cc=\sqcup_{i=1}^{k}\zz_i=(C, <_C, \mathcal{L}^{C}_{k})$,  $\aa=(X, P^{X}, \mathcal{L}_k^{X})$ and $\bb=(Y, P^{Y}, \mathcal{L}_k^{Y})$.

\begin{note}\label{note:lambda}
By construction, $\binom{\cc}{\mathcal{A}}_{can}$ is in 1-1 correspondence with the set $\Pi_{i=1}^k\binom{\zz_i}{\xx_i}$ and the function $\lambda : (\pi_1(X), \dots , \pi_{k}(X)) \mapsto \pi(X)$ is the bijection between sets $\Pi_{i=1}^k\binom{\zz_i}{\xx_i}$ and $\binom{\cc}{\mathcal{A}}_{can}$.
\end{note}


The following Claim states that if $\pi$ is a canonical copy of $\bb$ in $\cc$ and $\tilde{\mathcal{A}}$ is a copy of $\mathcal{A}$ in $\bb$, then  $\pi(\tilde{\mathcal{A}})$  is a canonical copy of $\mathcal{A}$ in $\cc$.

\begin{claim}\label{claim:2}
If $\pi \in \binom{\cc}{\bb}_{can}$ and $\tau \in \binom{\bb}{\aa}$, then $\sigma=\pi \circ \tau \in \binom{\cc}{\aa}_{can}$.
\end{claim}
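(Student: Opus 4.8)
The plan is to unwind the definition of a canonical copy and observe that composing $\tau$ with the coordinate maps of $\pi$ again produces copies in $PL^{(1)}$, so that Claim~\ref{claim:1} applies verbatim.

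First I would record what membership $\pi \in \binom{\cc}{\bb}_{can}$ gives us: by Remark~\ref{remark:1} there are copies $\pi_i \colon Y \to Z_i$ of $\yy_i := (Y, P^Y, L_i^Y)$ in $\zz_i$ for each $i \in [k]$ with $\pi(y) = (\pi_1(y), \dots, \pi_k(y))$ for all $y \in Y$. Put $\sigma_i := \pi_i \circ \tau \colon X \to Z_i$; then $\sigma = \pi \circ \tau$ satisfies $\sigma(x) = (\sigma_1(x), \dots, \sigma_k(x))$ for every $x \in X$.

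The one real point is to check that each $\sigma_i$ is a copy of $\xx_i = (X, P^X, L_i^X)$ in $\zz_i$. Since $\tau \in \binom{\bb}{\aa}$ is order preserving for $\aa$ and $\bb$, for every $x, x' \in X$ we have $x L_i^X x' \lr \tau(x) L_i^Y \tau(x')$ and $x P^X x' \lr \tau(x) P^Y \tau(x')$, i.e.\ $\tau$ is (in particular) a copy of $\xx_i$ in $\yy_i$ inside $PL^{(1)}$. A composition of order preserving maps between elements of $PL^{(1)}$ is again order preserving — and automatically injective, since such maps preserve a linear order — so $\sigma_i = \pi_i \circ \tau$ is a copy of $\xx_i$ in $\zz_i$.

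Finally I would feed the copies $\sigma_1, \dots, \sigma_k$ into Claim~\ref{claim:1}: it yields that the image of the map $x \mapsto (\sigma_1(x), \dots, \sigma_k(x))$ is a copy of $\aa$ in $\cc$, and by the first bullet of Remark~\ref{remark:1} this copy is canonical, i.e.\ belongs to $\binom{\cc}{\aa}_{can}$. As this map is exactly $\pi \circ \tau = \sigma$, the claim follows. I do not anticipate a genuine obstacle here; the only care needed is in the bookkeeping of which linear order sits in each $\xx_i$ and $\yy_i$, together with the observation that injectivity of $\sigma_i$ comes for free, so no separate verification is required.
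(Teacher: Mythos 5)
Your proof is correct and follows essentially the same route as the paper: decompose the canonical copy $\pi$ into its coordinate maps $\pi_i$, set $\sigma_i=\pi_i\circ\tau$, check that each $\sigma_i$ is order preserving for $\xx_i$ and $\zz_i$, and conclude via Claim~\ref{claim:1} (which the paper leaves implicit) that $\sigma$ is canonical. The only difference is cosmetic: the paper writes out the chains of equivalences for $P^X$ and $L_i^X$ explicitly, while you appeal to the general fact that compositions of order preserving maps are order preserving.
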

\begin{proof}
Since $\pi: Y \to C$ is a canonical copy, we have that $\pi=(\pi_1, \dots,  \pi_k),$ where $\pi_i : Y \to Z_i$ are copies of $Y$ in $Z_{i}$ for $i \in [k]$. Define $\sigma_{i}=\pi_{i}\circ \tau$ for $i=\in [k]$. It is sufficient to prove that for any $i \in [k]$  $\sigma_{i}$ is order preserving for $\xx$ and $\zz_{i}$.  

Indeed, since $\tau$ is order preserving for $\aa$ and $\bb$ and $\pi_{i}$ is order preserving for preserves $(Y, P^{Y}, L^{Y}_i)$ and $\zz_i$ for any  $i \in [k]$, we have for any $x,y \in X$ and for $i \in [k]$

$$xP^Xy \lr
 \tau(x)P^{Y}\tau(y) \lr  \pi_{i}(\tau(x))P^{Z_{i}}\pi_{i}(\tau(y))\lr \sigma_{i}(x)P^{Z_{i}}\sigma_{i}(y),$$

$$xL^X_{i}y \lr
 \tau(x)L^{Y}_{i}\tau(y) \lr \pi_{i}(\tau(x))L^{Z_{i}}\pi_{i}(\tau(y))\lr \sigma_{i}(x)L^{Z_{i}}\sigma_{i}(y).$$

Consequently, for $i \in [k]$, $\sigma_{i}$ is order preserving for $\xx$ and $\zz_{i}$, and $\sigma=(\sigma_1, \sigma_2)$ is a canonical copy of $\aa$ in $\cc$. 
\end{proof}

Our final Claim states that if $\tilde \bb$ is a canonical copy of $\bb$ in $\cc$, and $\tilde{\mathcal{A}}$ is a copy of $\mathcal{A}$ in $\tilde \bb$, then $\tilde{\mathcal{A}}$ is a canonical copy of $\mathcal{A}$ in $\cc$.

\begin{claim}\label{claim:3}
If $\pi \in \binom{\cc}{\bb}_{can}$ and $\sigma \in \binom{\pi(\bb)}{\aa}$, then $\sigma \in \binom{\cc}{\aa}_{can}$.
\end{claim}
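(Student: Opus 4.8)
## Proof proposal for Claim \ref{claim:3}

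\textbf{Approach.} The statement says that a copy of $\aa$ sitting inside a \emph{canonical} copy $\pi(\bb)$ of $\bb$ is automatically a canonical copy of $\aa$ in $\cc$. The natural plan is to reduce this to Claim \ref{claim:2}: if we can exhibit a copy $\tau \in \binom{\bb}{\aa}$ such that $\sigma = \pi \circ \tau$, then Claim \ref{claim:2} immediately gives $\sigma \in \binom{\cc}{\aa}_{can}$. So the whole task is to factor $\sigma$ through $\pi$.

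\textbf{Key steps.} First I would recall that $\pi$, being a canonical copy of $\bb$ in $\cc$, is an order-preserving bijection from $Y$ onto its image $\pi(Y) \subseteq C$; in particular $\pi$ is injective, so it has a well-defined inverse $\pi^{-1} : \pi(Y) \to Y$ on its image. Next, since $\sigma \in \binom{\pi(\bb)}{\aa}$, by definition $\sigma$ is an order-preserving map $X \to \pi(Y)$ whose image is a copy of $\aa$; in particular $\sigma(X) \subseteq \pi(Y)$, so the composition $\tau := \pi^{-1} \circ \sigma : X \to Y$ makes sense. Then I would check that $\tau$ is order preserving for $\aa$ and $\bb$: for $x,y \in X$ and each $i \in [k]$, one has $x L_i^X y \lr \sigma(x) <_{lx_i} \sigma(y)$ (since $\sigma$ is order preserving, using the orders on $\pi(Y)$ inherited from $\cc$) $\lr \tau(x) L_i^Y y$... more precisely $\lr \pi(\tau(x)) <_{lx_i} \pi(\tau(y)) \lr \tau(x) L_i^Y \tau(y)$, the last equivalence because $\pi$ is order preserving; and similarly for $P^X$ versus $P^Y$. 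Hence $\tau \in \binom{\bb}{\aa}$. Finally, since $\pi \circ \tau = \pi \circ \pi^{-1} \circ \sigma = \sigma$, Claim \ref{claim:2} applies with this $\tau$ and yields $\sigma = \pi \circ \tau \in \binom{\cc}{\aa}_{can}$, which is what we wanted.

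\textbf{Main obstacle.} There is no deep obstacle here; the only thing to be careful about is the bookkeeping of which partial/linear orders are being used on $\pi(Y)$. One must use that the orders on the copy $\pi(\bb) \subseteq \cc$ are exactly the restrictions of $<_C$ and the $<_{lx_i}$ to $\pi(Y)$ (this is built into the definition of $\binom{\pi(\bb)}{\aa}$ and of $\subseteq$ in $PL^{(k)}$), and that $\pi$, as an isomorphism $\bb \to \pi(\bb)$, translates these back to $P^Y$ and $L_i^Y$. Once that identification is made explicit, $\tau = \pi^{-1} \circ \sigma$ is visibly order preserving and the claim follows from Claim \ref{claim:2}. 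I would keep the write-up to a few lines, since the real content was already established in Claims \ref{claim:1} and \ref{claim:2}.
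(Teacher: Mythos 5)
Your proposal is correct and follows essentially the same route as the paper: factor $\sigma$ as $\pi\circ\tau$ with $\tau=\pi^{-1}\circ\sigma$, verify $\tau$ is order preserving for $\aa$ and $\bb$, and then invoke Claim \ref{claim:2}. The order-preservation bookkeeping you spell out is exactly the (slightly more detailed) verification the authors themselves sketch.
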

\begin{proof}
Since $\pi$ is an isomorphism between $\bb$ and $\pi(\bb)$, then $\pi^{-1}$ exists and is order preserving for $\pi(\bb)$ and $\bb$. Therefore, $\tau=\pi^{-1} \circ \sigma$ is order preserving mapping for $\mathcal{A}$ and $\bb$. Finally, Claim \ref{claim:2} applied for $\pi$ and $\tau$ gives that $\pi \circ \tau =\sigma$ is canonical copy of $\mathcal{A}$.

\end{proof}

\section{Proof of Theorem \ref{thm:main}}\label{sec:proof}
Let $\mathcal{A}=(X, P^{X}, \mathcal{L}_{k}^{X})$ and $\bb=(Y,P^{Y}, \mathcal{L}_{k}^Y)$ be given. Applying Theorem \ref{thm:product} with $\xx_{i}=(X,P^{X}, L_{i}^X)$ for $i \in [k]$ and $\yy_i=(Y,P^{Y}, L_{i}^Y)$ for $i \in [k]$ we obtain $\zz_{i}=(Z_{i},P^{Z_{i}}, L_{i}^{Z_{i}})$ for $i \in [k]$ .

Set $\cc=\sqcup_{i=1}^{k} \zz_i$. Let $\chi: \binom{\cc}{\mathcal{A}} \mapsto \{red,blue\}$ be a colouring.
Since $\binom{\cc}{\mathcal{A}}_{can} \subseteq \binom{\cc}{\mathcal{A}}$, colouring $\chi$ induces $\{red,blue\}$ colouring of $\binom{\cc}{\mathcal{A}}_{can}$. By Note \ref{note:lambda}, sets in $\binom{\cc}{\mathcal{A}}_{can}$ and elements of $\Pi_{i=1}^{k}\binom{\zz_i}{\xx_i}$ are in 1-1 correspondence and thus $\lambda^{-1} \circ \chi$ induces a colouring of $\Pi_{i=1}^{k}\binom{\zz_i}{\xx_i}$. By a choice of $\zz_i$ (recall that $\zz_i \in PL^{(1)}, \; i \in [k]$) there are $\tilde{Y}_i \in \binom{\zz_{i}}{\yy_{i}}$ for $i \in [k]$, such that 
$\Pi_{i=1}^k\binom{\tilde{\yy}_i}{\xx_i}$ is monochromatic and w.l.o.g we assume that all elements of $\Pi_{i=1}^k\binom{\tilde{\yy}_i}{\xx_i}$ are red.

Let $\pi_{i} : \yy_i \to \tilde{\yy}_i$ be the corresponding isomorphism between $\yy_i$ and $\tilde{\yy}_i$ for $i \in [k]$ and
let $\pi : B \to C$ be a mapping defined by 
$\pi(y)=(\pi_1(y), \dots, \pi_k(y))$ for each $y \in Y$.
Then, by Claim \ref{claim:1}, $\tilde{\bb}=\pi(\bb)$ is a copy of $\bb$ in $\cc$. Let $\tilde{\mathcal{A}}$ be a copy of $\mathcal{A}$ in $\tilde{\bb}$, then, by Claim \ref{claim:3}, $\tilde{\mathcal{A}}$ is a canonical copy of $\mathcal{A}$ in $\cc$.

Let $\sigma$ be isomorphism from $\mathcal{A}$ to $\tilde{\mathcal{A}}$, then $\sigma=(\sigma_1, \dots, \sigma_k)$, where $\sigma_i$ is order preserving for $\xx_i$ and $\tilde{\yy}_i$ for any $i \in [k]$. Since $\tilde{\mathcal{A}}\in \binom{\cc}{\mathcal{A}}_{can}$ and all elements of $\Pi_{i=1}^k\binom{\tilde{\yy}_i}{\xx_i}$ are red, we get that $(\lambda^{-1} \circ \chi )(\tilde{\mathcal{A}})$ is red and  consequently $\tilde{\mathcal{A}}$ is red.
Therefore,
 set $\binom{\tilde{\bb}}{\mathcal{A}}$ is monochromatic.
 
\begin{bibdiv}
\begin{biblist}


\bib{Fouche}{article}{
   author={Fouch{\'e}, W. L.},
   title={Symmetry and the Ramsey degree of posets},
   note={15th British Combinatorial Conference (Stirling, 1995)},
   journal={Discrete Math.},
   volume={167/168},
   date={1997},
   pages={309--315},
   issn={0012-365X},
   review={\MR{1446753}},
   doi={10.1016/S0012-365X(96)00236-1},
}

\bib{NR}{article}{
   author={Ne{\v{s}}et{\v{r}}il, Jaroslav},
   author={R{\"o}dl, Vojt{\v{e}}ch},
   title={Combinatorial partitions of finite posets and lattices---Ramsey
   lattices},
   journal={Algebra Universalis},
   volume={19},
   date={1984},
   number={1},
   pages={106--119},
   issn={0002-5240},
   review={\MR{748915}},
   doi={10.1007/BF01191498},
}

\bib{NR2}{article}{
   author={N\vspace{0mm}e{\v{s}}et{\v{r}}il, Jaroslav},
   author={R{\"o}dl, Vojt{\v{e}}ch},
   title={Ramsey partial orders from acyclic graphs},
   note={arxiv.org:1608.04662},
   }

\bib{PTW}{article}{
   author={Paoli, M.},
   author={Trotter, W. T., Jr.},
   author={Walker, J. W.},
   title={Graphs and orders in Ramsey theory and in dimension theory},
   conference={
      title={Graphs and order},
      address={Banff, Alta.},
      date={1984},
   },
   book={
      series={NATO Adv. Sci. Inst. Ser. C Math. Phys. Sci.},
      volume={147},
      publisher={Reidel, Dordrecht},
   },
   date={1985},
   pages={351--394},
   review={\MR{818500}},
}

\bib{Promel}{book}{
   author={Pr{\"o}mel, Hans J{\"u}rgen},
   title={Ramsey theory for discrete structures},
   note={With a foreword by Angelika Steger},
   publisher={Springer, Cham},
   date={2013},
   pages={xvi+232},
   isbn={978-3-319-01314-5},
   isbn={978-3-319-01315-2},
   review={\MR{3157030}},
   doi={10.1007/978-3-319-01315-2},
}

\bib{SZ}{article}{
   author={Solecki, S.},
   author={Zhao, M.},
   title={A Ramsey theorem for partial orders with linear extensions},
   note={arXiv:1409.5846},
}

\end{biblist}
\end{bibdiv}

\end{document}